\newtheorem{theorem}{Theorem}
\newtheorem{proposition}[theorem]{Proposition}
\newtheorem{definition}{Definition}
\newif\ifpreprint
\DeclareMathOperator*{\argmax}{argmax}
\DeclareMathOperator*{\argmin}{argmin}
\newcommand{\innp}[2]{\left\langle #1, #2 \right\rangle}
\newcommand{\norm}[1]{\left\| #1 \right\|}
\newcommand{\vx}{\mathbf{x}}
\newcommand{\vvv}{\mathbf{v}}
\newcommand{\vd}{\mathbf{d}}
\newcommand{\vu}{\mathbf{u}}
\newcommand{\vw}{\mathbf{w}}
\newcommand{\vr}{\mathbf{r}}
\newcommand{\va}{\mathbf{a}}
\newcommand{\vs}{\mathbf{s}}
\newcommand{\vb}{\mathbf{b}}
\newcommand{\vz}{\mathbf{z}}
\newcommand{\R}{\mathbb{R}}
\newcommand{\Z}{\mathbb{Z}}
\newcommand{\Xset}{{\ensuremath{\mathcal{X}}}\xspace}
\newcommand{\Scal}{{\ensuremath{\mathcal{S}}}}
\newcommand{\interval}[1]{ [\![  #1 ]\!]}
\newcommand{\intround}[1]{ \lfloor #1 \rceil }
\begin{document}


\begin{frontmatter}


\paperid{123} 


\title{Efficient Sparse Flow Decomposition Methods for {RNA} Multi-Assembly}


\author[A]{\fnms{Mathieu}~\snm{Besançon}\orcid{0000-0002-6284-3033}\thanks{Corresponding Author. Email: mathieu.besancon@inria.fr.}}

\address[A]{Université Grenoble Alpes, Inria, LIG, CNRS}


\begin{abstract}
Decomposing a flow on a Directed Acyclic Graph (DAG) into a weighted sum of a small number of paths is an essential task in operations research and bioinformatics.
This problem, referred to as Sparse Flow Decomposition (SFD), has gained significant interest, in particular for its application in RNA transcript multi-assembly, the identification of the multiple transcripts corresponding to a given gene and their relative abundance.
Several recent approaches cast SFD variants as integer optimization problems, motivated by the NP-hardness of the formulations they consider. We propose an alternative formulation of SFD as a data fitting problem on the conic hull of the flow polytope.
By reformulating the problem on the flow polytope for compactness and solving it using specific variants of the Frank-Wolfe algorithm,
we obtain a method converging rapidly to the minimizer of the chosen loss function while producing a parsimonious decomposition.
Our approach subsumes previous formulations of SFD with exact and inexact flows and can model different priors on the error distributions.
Computational experiments show that our method outperforms recent integer optimization approaches in runtime, but is also highly competitive in terms of reconstruction of the underlying transcripts, despite not explicitly minimizing the solution cardinality.
\end{abstract}
    
\end{frontmatter}


\section{Introduction}

We consider the problem of decomposing flows into a weighted sum of paths in a Directed Acyclic Graph (DAG) $G = (V,E)$, specifically seeking a small support for the weights.
We refer to the problem as the Sparse Flow Decomposition (SFD) problem and will state various mathematical formulations of SFD from the literature.
We define a \emph{pseudo-flow} as a function $E \rightarrow \R_+$ on the edges of the graph and equivalently, as a vector $\vr \in \R_+^{|E|}$.
A flow is a pseudo-flow respecting a conservation constraint on all nodes except the source $s$ and target $t$, i.e., the sum of flows coming into a node is equal to the sum of the flow going out of it.
This problem has been studied intensively in the last years, in particular thanks to the key application of multi-assembly for RNA transcripts in bioinformatics.
In this application, the DAG corresponds to a given gene \emph{splice graph} in which nodes represent exons (RNA sections encoding information on the gene) to which an artificial source and sink are added, and edges between two exons correspond to reads with one exon following the other.
The goal is to identify \emph{transcripts} which are sequences of exons corresponding to one modality of expression of the gene.
A transcript is equivalent to a path in the graph, and the weight associated to that transcript corresponds to the relative abundance of that transcript to express the gene.
We illustrate the problem setup in Figure~\ref{fig:csetup}.

\begin{figure}[h]
\centering
\begin{subfigure}{0.48\textwidth}
\centering
\includegraphics[width=0.85\textwidth]{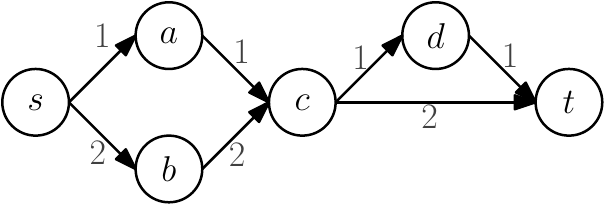}
\caption{Example splice graph with exons $a$ to $d$ and associated flow.\vspace*{0.35cm}}
\label{fig:splicegraph1}
\end{subfigure}
\hfil
\begin{subfigure}{0.48\textwidth}
\centering
\includegraphics[width=0.85\textwidth]{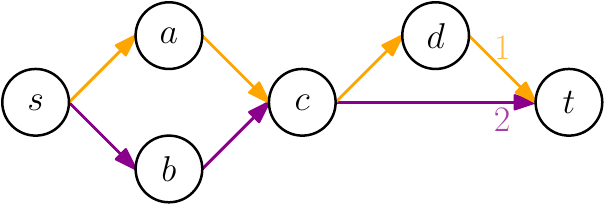}
\caption{A decomposition of the flow into two paths $s-a-c-d-t$ (orange) and $s-b-c-t$ (purple) of weight one and two respectively.\vspace*{0.35cm}}
\label{fig:decomp1}
\end{subfigure}
\begin{subfigure}{0.48\textwidth}
\centering
\includegraphics[width=0.85\textwidth]{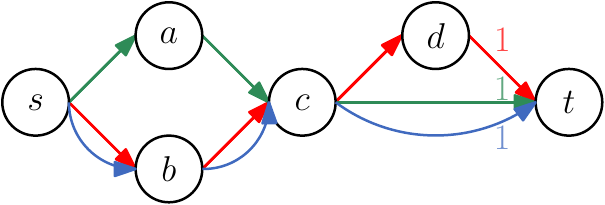}
\caption{A decomposition of the flow into three paths $s-a-c-t$ (green), $s-b-c-t$ (blue), and $s-b-c-d-t$ (red), all of weight one.\vspace*{0.5cm}}
\label{fig:decomp2}
\end{subfigure}
\caption{Illustration of the problem setup. The problem input illustrated in Figure~\ref{fig:splicegraph1} is provided as a directed acyclic graph with flows assigned to edges.
All non-terminal nodes $a\dots d$ correspond to exons. Figure~\ref{fig:decomp1} and \ref{fig:decomp2} show two flow decompositions of the input, producing two disjoint sets of paths.}
\label{fig:csetup}
\end{figure}

The problem has received scrutiny from the bioinformatics, algorithm design, and optimization community in the last decade due to the high relevance for transcriptomics and the growing availability of corresponding data.
We highlight the lines of research connected to our work and more generally SFD and that are compatible with inexact flow models, i.e., when the input pseudo-flow data are contaminated with errors and may not form a flow.

\paragraph*{Related work on Flow Decomposition\\}
Most early work focused on sparse fitting approaches and designing efficient two-step algorithms.
In a first stream of work, convex optimization models designed for sparse regression were leveraged e.g., in \cite{li2011isolasso,li2011sparse} to fit sparse predictive models on the weights associated with paths. One major drawback of such an approach is the need to first enumerate the exponential number of paths before fitting the sparse model.
In order to avoid explicitly working on the exponential number of paths, \cite{tomescu2013novel} design a two-step approach first using a minimum cost flow model to produce a flow fitting the data under a given loss, followed by a heuristic to decompose that flow into a weighted sum of paths.
In \cite{tomescu2015explaining}, the authors consider the same problem with the restriction that only a few paths can be used to decompose the flow.
Indeed, the sparsity in the number of paths corresponds to a property observed on real splice graphs and corresponding transcript abundances.
They also show that several formulations of sparse flow decomposition asking for a given upper bound on the number of allowed paths are NP-hard.
Similarly, the method proposed in \cite{bernard2014efficient} is based on a sparse statistical model for the path weights and encodes the problem as a network flow problem producing a flow which is by design in the convex hull of paths (since the graph is acyclic).
Unlike previous approaches, they model the flow values on the arcs as Poisson random variables and optimize the corresponding log-likelihood instead of the least-square loss.
However, they then require a greedy heuristic to decompose their flow into the corresponding weighted set of paths, which for exact flows is as hard as the original problem, greatly hindering the practicality of the approach.
By casting their formulation and the least-square minimization from \cite{tomescu2013novel} in a form amenable to FW algorithms, we revisit these formulations and show their merit when combined with a sparsity-inducing algorithmic approach instead of a modified formulation.

In another approach to handle errors in the RNA-Seq data, the authors of \cite{williams2019rna} propose a formulation relaxing the constraint that the weighted sum of paths exactly matches a flow
and instead construct an interval of flow values for each edge, and then design a custom heuristic to handle these flow intervals.
One major drawback of this approach is already requiring the production of these intervals, and then seeking a feasible solution in this interval.
Even if the interval contains the underlying flow, it is not given that the minimum-cardinality solution respecting these bounds will fit this ground truth flow.

More recently, the continuous progress in mixed-integer optimization methods and solvers \cite{koch2022progress} allowed considering explicit mixed-integer formulations of sparse flow decomposition problems for instance scales that would not have been tractable a decade ago.
The first integer optimization model was proposed in \cite{dias2022efficient}, also accommodating the inexact version of the problem from \cite{williams2019rna} in which the weighted sum of paths must lie within some distance of the input flow values.
The formulation is based on a quadratic number of variables encoding paths using flow conservation constraints, the flow expressed as the sum of the paths and the weight variables.
The authors also adapt the least-square formulation from \cite{tomescu2015explaining}, resulting in a mixed-integer quadratic (convex) optimization problem which is theoretically hard and computationally harder than mixed-integer linear optimization problems.
As an alternative formulation handling inexact flows, \cite{dias2024accurate} proposed to lift the uncertainty handling from edges to paths in a mixed-integer formulation.
The rationale for handling errors at the path level is that edges appearing in multiple paths are more prone to read errors in the flow value.

An experimental assessment of the minimality assumption was conducted in \cite{kloster2018practical}, showing that even though most ground truth solutions are of minimum support (i.e.~use the smallest possible number of $s-t$ paths), this is not the case for all of them.
This observation naturally leads to seeking \emph{sparsity} of the solution in terms of number of paths, rather than its minimality.
Furthermore, the arguments for computing a set of paths of minimum cardinality do not necessarily hold in the realistic case where the input data are contaminated with errors.
Finally, we note a recent line of work detecting paths that are required in optimal solutions of the decomposition \cite{grigorjew_et_al:LIPIcs.SEA.2024.14,sena2024safe} and that can be exploited in integer optimization formulations.

\paragraph*{Frank-Wolfe algorithms\\}

Frank-Wolfe (FW) or conditional gradient algorithms \cite{frank1956algorithm,levitin1966constrained} optimize differentiable functions over compact convex sets.
They have benefitted from a strong interest in the last decade, in particular thanks to their advantages for large-scale machine learning applications \cite{jaggi2013revisiting},
including their low cost per iteration and possible exploitation of the structure of the constraint set.
At its core, FW produces iterates as convex combinations of a small number of extreme points of the feasible set,
while only requiring that the function is differentiable (and Lipschitz-smooth in typical cases)
and equipped with zeroth and first-order oracles, and that the feasible region can be accessed through a \emph{linear minimization oracle} (LMO),
i.e., an algorithm which, given a direction, computes an extreme point of the feasible region minimizing its inner product with the direction.
On a generic polytope, this LMO can be implemented through linear optimization but on many structured sets, specialized algorithms can implement the LMO without forming the linear problem constraints explicitly.
Finally, we highlight that FW algorithms produce a so-called Frank-Wolfe gap as a by-product at every iteration, which upper-bounds the unknown primal gap.
We refer interested readers to the recent surveys \cite{bomze2021frank,braun2022conditional} for applications and important results on FW algorithms.

\paragraph*{Contributions\\}

Our contributions are the following.
\begin{enumerate}
    \item We propose a formulation of SFD on the flow polytope optimizing either the least-square error or Poisson log-likelihood and a corresponding solution approach based on Frank-Wolfe algorithms. Importantly, the resulting optimization problems are convex and formulated over the convex hull of s-t paths. We provide theoretical justification and computational evidence for sparsity of the solutions obtained by our method.
    \item We establish the convergence rate of our method and evaluate its cost per iteration, showing linear convergence under both the least-square and Poisson log-likelihood losses and despite the lack of strong convexity in both cases.
    \item We evaluate our method compared to recent integer optimization approaches on reference multi-assembly datasets with and without error contamination in the flow data.
 The results show that our approach not only dominates the integer formulations in runtime, but also produces high-quality solutions in terms of reconstruction error, path identification, and solution sparsity.
\end{enumerate}

\paragraph*{Notation and terminology\\}
Vectors are denoted with bold small letters, scalars with standard small letters, and matrices with capital bold letters.
For $n$, $\interval{n} \ \mathrm{:=}\  \{1\dots n\}$. We use $\intround{a}$ for a rounding of $a$ to the closest integer.
$\Delta_n$ denotes the standard simplex. For $u$ a node in the graph, $\delta^{\mathrm{in}}_u$, $\delta^{\mathrm{out}}_u$ will denote the set of edges with that node as destination, origin respectively.
When unspecified, the default norm $\norm{ \cdot }$ is the Euclidean norm in the appropriate vector space. The function $\log(\cdot)$ denotes the natural logarithm.
For a function $f$, we denote with $D^k f[\vu_1\dots\vu_k]$ the $k$-th directional derivative of $f$ along directions $\vu_1\dots \vu_k$.

\section{Sparse Flow Decompositions via Frank-Wolfe Approaches}

In this section, we formulate SFD under the least-square and Poisson models, transforming both into constrained minimization problems over polytopes.

\subsection{Least-square Problem Formulation}\label{sec:prbformulation}

The flow decomposition problem can be viewed as finding a sparse approximation of a given (pseudo-)flow $\vr$ on a DAG given by a conic combination of weighted paths:
\begin{align*}
\min_{\vx, \vvv, \vw}\;& \frac12 \norm{\vx - \vr}^2_2 \\
\text{s.t. } & \sum_{s=1}^{|E| - |V| + 2} w_s \vvv_s = \vx \\
& \vvv_s \in \Xset{}, \norm{\vw}_0 \leq k, w_s \in \Z_+ \forall s \in \interval{k},
\end{align*}
where $\Xset{}$ is the set of $s-t$ paths, and $k$ is an upper bound on the number of paths to use.
If $\vr$ is a flow, then the optimal solution has a zero objective value.
Furthermore, the flow $\vx$ can be decomposed as the weighted sum of at most $|E|- |V| + 2$ paths \cite{vatinlen2008simple}, i.e., the cyclomatic number of the DAG plus one, hence the upper bound on the number of paths.
The path weights $\vw$ represents the abundance of each transcript, the integrality constraints on $\vw$ are a modeling choice depending on prior knowledge on the input data.
The formulation thus seeks the flow $\vx$ that is the closest to $\vr$ in the Euclidean sense and that can be formed as an integer conic combination of $k$ paths.
From a statistical perspective, this modeling choice follows naturally, e.g., from a Gaussian assumption on the errors polluting the flow data.
The formulation captures several previous approaches \cite{tomescu2015explaining,williams2019rna,dias2022efficient} and unifies both exact and inexact flow decompositions.
An aspect of importance for RNA reconstruction and other applications of SFD is producing a sparse decomposition, i.e., a decomposition using a small number of paths.
This consideration motivated several lines of work to formulate the problem objective solely on the solution sparsity, i.e.~minimizing the weight support subject to fitness to the data.
An objective function minimizing the number of paths leads to NP-hard versions of the problem, for which the natural solution method is based on mixed-integer formulations.
Instead, we propose an algorithmic approach to sparsity, leveraging methods based on the Frank-Wolfe algorithm which naturally produce iterates as convex combinations of a small number of vertices.
We reformulate and relax the problem to:
\begin{align*}
\min_{\vx, \vvv, \vw, \tau}\;& \frac12 \norm{\vx - \tau \vr}^2_2 \\
\text{s.t. } & \sum_{k=1}^{s} w_s \vvv_s = \vx\\
& \vvv_s \in \Xset{}, w_s \in \Delta_n \forall s \in \interval{k} \\
& \tau \geq 0.
\end{align*}

The additional $\tau$ variable scales the flow to the appropriate magnitude to be contained in the convex hull of paths instead of its conic hull.
We can then expand its expression by minimizing the objective w.r.t.~$\tau$:
\begin{align*}
& \tau_{\min} \in \argmin_\tau \norm{\vx - \tau \vr}^2 \; \Leftrightarrow \; \tau_{\min} = \frac{\innp{\vr}{\vx}}{\norm{\vr}^{2}},
\end{align*}
leading to our final least-square formulation:
\begin{align*}
\min_{\vx \in \mathrm{conv}(\Xset{})}\;& \frac12 \norm{\vx - \frac{\innp{\vx}{\vr}}{\norm{\vr}^{2}} \vr }^2.
\end{align*}
Importantly, the problem can now be tackled efficiently in a FW setting, since the constraint set admits an LMO implementable through a shortest path computation on the DAG.
One requirement missing from this formulation is that $\vx$ is formed as a \emph{sparse} convex combination of paths.
Instead of enforcing this in the problem formulation, we will ensure sparsity of the iterates and final solution through the algorithm leveraged for the solution process, namely Frank-Wolfe methods and in particular active set-based methods.
The sparsity in the number of vertices used to construct the solutions $\vx_t$ at any iteration $t$ has been a primary motivation for the strong interest in Frank-Wolfe methods in the last decade \cite{jaggi2013revisiting,braun2022conditional}.
For most practical cases, the empirical sparsity is much better than the upper bounds that could be obtained (at most one vertex per iteration for most Frank-Wolfe variants).
More recently, results on active set identification of Frank-Wolfe FW methods provided theoretical evidence and guarantees on the sparsity of some FW methods.
The active set identification property introduced in \cite{bomze2020active} ensures that when using an optimal step size (e.g.~through line search), the Away-step Frank-Wolfe algorithm reaches the optimal face of the problem in a finite number of iterations and never leaves it afterwards.
This property was extended to the Blended Pairwise Conditional Gradient (BPCG) \cite{tsuji2022pairwise} in \cite{wirth2024pivoting}, in both cases, without the need to assume strong convexity.
The result on active set identification comes with a second result which is a non-trivial upper bound on the number of vertices that are required to represent the current iterate.
After the finite number of iterations needed to reach the optimal face $\mathcal{F}^*$, \cite{wirth2024pivoting} establishes a bound of $\mathrm{dim}(\mathcal{F}^*) + 1$ vertices required to form the iterate.
Not only can this bound be reached, but the corresponding convex decomposition can be obtained as the basic solution of an auxiliary linear optimization problem.
Furthermore, the computational evidence pointed out that BPCG was already sparse enough not to require that additional step and provided sufficient sparsity matching this bound.
We present the BPCG algorithm applied to our setting in Algorithm~\ref{alg:algorithm}.
The quantity $g_t$ corresponds to the FW gap, $\texttt{lmo}_G(\cdot)$ computes the shortest path incidence vector with the edge weights as argument.
The function $\mathrm{weight}_{\mathcal{S}}(\vvv)$ returns the weight of the given vertex in the decomposition of the iterate.
\begin{algorithm}[tb]
    \caption{Blended pairwise conditional gradient for flow decomposition}
    \label{alg:algorithm}
    \textbf{Input}: reference flow $\vr$, DAG $G$, loss function $f$\\
    \textbf{Output}: Paths and weights $\{\vvv_k\}_{k \in \interval{|\Scal_T|}}, \{\lambda_k\}_{k \in \interval{|\Scal_T|}}$.
    \vspace*{-0.3cm}
    \begin{algorithmic}[1] 
        \STATE $\vx_0 \gets p_G(\mathbf{1})$, $g_0 \gets +\infty$
        \FOR{$t \in \interval{T}$ }
        \STATE $\vvv_t \gets \texttt{lmo}_G(\nabla f(\vx_t))$
        \STATE $\va_t \gets \argmax_{\vvv \in \Scal_t} \innp{\nabla f(\vx_t)}{\vvv}$\label{line:search1}
        \STATE $\vs_t  \gets \argmin_{\vvv \in \Scal_t} \innp{\nabla f(\vx_t)}{\vvv}$\label{line:search2}
        \STATE $g_t  \gets \innp{\nabla f(\vx_t)}{\vx_t - \vvv_t}$        
        \IF { $\innp{\nabla f(\vx_t)}{\va_t - \vs_t} \geq g_t$ } 
        \STATE $\vd_t \gets \va_t - \vs_t$
        \STATE $\gamma_{\max} \gets \mathrm{weight}_{\Scal_t}(\va_t)$
        \STATE $\gamma_t \gets \argmin_{\gamma \in [0,\gamma_{\max}]} f(\vx_t - \gamma \vd_t)$
        \IF { $\gamma_t < \gamma_{\max}$ }
            \STATE $\Scal_{t+1} \gets \Scal_{t}$
        \ELSE
            \STATE $\Scal_{t+1} \gets \Scal_{t} \backslash \{\va_t\}$
        \ENDIF
        \ELSE
        \STATE $\vd_t \gets \vx_t - \vvv_t$
        \STATE $\gamma_t \gets \argmin_{\gamma \in [0,1]} f(\vx_t - \gamma \vd_t)$
        \IF { $\gamma_t < 1$ }
            \STATE $\Scal_{t+1} \gets \Scal_{t} \cup \{\vvv_t\}$
        \ELSE
            \STATE $\Scal_{t+1} \gets \{\vvv_t\}$
        \ENDIF
        \ENDIF
        \STATE $\vx_{t+1} \gets \vx_t - \gamma_t \vd_t$
        \ENDFOR
        \STATE \textbf{return} $(\vx_T, \Scal_T)$
    \end{algorithmic}
\end{algorithm}

\subsection{Optimal Step Size}

Numerous step-size strategies have been proposed for Frank-Wolfe algorithms, from function-agnostic step sizes based on the iteration count to algorithms efficiently approximating a line search.
For our least-square formulation however, the optimal step size can easily be computed from the problem data as shown in Proposition~\ref{eq:optimalstepsize}.

\begin{proposition}
At any iteration of the BPCG algorithm applied to the least-square problem, given the current iterate $\vx$ and the current direction $\vd$,
the optimal step size $\gamma^*$ is given by:
\begin{equation}\label{eq:optimalstepsize}
\gamma^* = \min \left\{\frac{ \norm{\vr}^2 \innp{\vx}{\vd} - \innp{\vd}{\vr} \innp{\vx}{\vr} }{ \norm{\vd}^2 \norm{\vr}^2 - \innp{\vd}{\vr}^2 }, \gamma_{\max} \right\}.
\end{equation}
\end{proposition}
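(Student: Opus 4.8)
The plan is to reduce the computation to an exact line search of a univariate convex quadratic and read off its vertex. First I would rewrite the objective in a form that exposes its quadratic structure. Let $P$ be the orthogonal projector onto $\mathrm{span}(\vr)$, i.e.\ $P\vx = \frac{\innp{\vx}{\vr}}{\norm{\vr}^2}\vr$, so that the residual map $\vx \mapsto \vx - \frac{\innp{\vx}{\vr}}{\norm{\vr}^2}\vr$ is exactly $(I - P)\vx$. Since $I - P$ is symmetric and idempotent, the loss becomes $f(\vx) = \frac12 \norm{(I-P)\vx}^2 = \frac12 \innp{\vx}{(I-P)\vx}$, and in particular $\nabla f(\vx) = (I-P)\vx$, a fact I will reuse for the sign argument below.

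Next I would substitute the candidate iterate $\vx - \gamma \vd$ and expand $\phi(\gamma) := f(\vx - \gamma\vd)$ as a quadratic in the scalar $\gamma$. The coefficient of $\gamma^2$ is $\frac12 \innp{\vd}{(I-P)\vd} = \frac12\big(\norm{\vd}^2 - \innp{\vd}{\vr}^2/\norm{\vr}^2\big)$, which is nonnegative by Cauchy--Schwarz; hence $\phi$ is convex and its unconstrained minimizer is obtained from $\phi'(\gamma) = 0$, giving $\gamma^\star = \innp{\vd}{(I-P)\vx}/\innp{\vd}{(I-P)\vd}$. Expanding both inner products through $I-P$ and clearing the common denominator $\norm{\vr}^2$ yields precisely the quotient in \eqref{eq:optimalstepsize}; this is the only routine computation, which I would relegate to a one-line expansion.

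It then remains to justify why the constrained minimizer over $[0,\gamma_{\max}]$ is the one-sided clip $\min\{\gamma^\star, \gamma_{\max}\}$ rather than a two-sided projection onto the interval. As $\phi$ is convex, its minimizer over $[0,\gamma_{\max}]$ is $\gamma^\star$ clipped to the interval, so it suffices to show $\gamma^\star \geq 0$. The numerator equals $\innp{\vd}{(I-P)\vx} = \innp{\nabla f(\vx)}{\vd}$ and the denominator is nonnegative, so the sign of $\gamma^\star$ is that of $\innp{\nabla f(\vx)}{\vd}$. I would then invoke the direction selection in Algorithm~\ref{alg:algorithm}: for the Frank--Wolfe direction $\vd = \vx - \vvv_t$ this inner product is the Frank--Wolfe gap $g_t \geq 0$, while the pairwise direction $\vd = \va_t - \vs_t$ is selected exactly when $\innp{\nabla f(\vx)}{\va_t - \vs_t} \geq g_t \geq 0$. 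In both branches $\innp{\nabla f(\vx)}{\vd} \geq 0$, hence $\gamma^\star \geq 0$ and the lower clip is vacuous.

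The main obstacle I anticipate is not the algebra but the degenerate case in which the denominator vanishes, i.e.\ $\vd$ collinear with $\vr$. By the Cauchy--Schwarz equality case this forces $(I-P)\vd = 0$, whence both numerator and denominator vanish and $\phi$ is constant: the objective is flat along $\vd$ and every step is optimal. I would flag this boundary case explicitly, noting that it does not arise for the vertex-difference directions produced by the algorithm and that the stated formula is to be understood as the exact minimizer on the generic, well-defined branch.
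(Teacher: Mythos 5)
Your proof is correct and follows essentially the same route as the paper's: both reduce the claim to an exact line search of the univariate convex quadratic $\gamma \mapsto f(\vx - \gamma\vd)$ and read off its vertex, and your $(I-P)\vx$ and $(I-P)\vd$ are exactly the paper's auxiliary vectors $\mathbf{a}$ and $\mathbf{b}$. You go slightly further than the paper by justifying the one-sided clip $\min\{\gamma^\star,\gamma_{\max}\}$ (showing $\gamma^\star \geq 0$ from the algorithm's direction-selection rule) and by flagging the degenerate case of a vanishing denominator, both of which the paper leaves implicit.
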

\begin{proof}
The step size can be derived from the expression of the objective evaluated at $\vx - \gamma \vd$:
\begin{align*}
    f(\vx - \gamma \vd) & = \frac{1}{2} \left\| \vx - \gamma \vd - \frac{\innp{\vx - \gamma \vd}{\vr}}{\norm{\vr}^2} \vr \right\|^2 \\
    & = \frac12 \left\| \vx - \frac{\innp{\vx}{\vr}}{\norm{\vr}^2} \vr - \gamma \left(\vd - \frac{\innp{\vd}{\vr}}{\norm{\vr}^2} \vr\right) \right\|^2 \\
    &= \frac12 \left\| \mathbf{a} - \gamma \mathbf{b} \right\|^2,
\end{align*}
where $\mathbf{a}$, $\mathbf{b}$ are the appropriate expressions substituted here for conciseness.
We can differentiate the loss with respect to $\gamma$, resulting in the unconstrained minimum $\gamma^* = \frac{\innp{\mathbf{a}}{\mathbf{b}}}{\innp{\mathbf{b}}{\mathbf{b}}}$.
By expanding the terms $\mathbf{a}$ and $\mathbf{b}$, we have:
\begin{align*}
    \innp{\mathbf{a}}{\mathbf{b}} &= \innp{\vx}{\vd} - 2 \frac{\innp{\vd}{\vr} \innp{\vx}{\vr}}{\norm{\vr}^2} + \frac{\innp{\vx}{\vr} \innp{\vd}{\vr} \innp{\vr}{\vr}}{\norm{\vr}^4} \\
    \innp{\mathbf{b}}{\mathbf{b}} &= \norm{\vd}^2 - 2 \frac{\innp{\vd}{\vr}^2}{\norm{\vr}^2} + \frac{\innp{\vd}{\vr}^2}{\norm{\vr}^2} \\
    \gamma^* =& \frac{\innp{\mathbf{a}}{\mathbf{b}}}{\innp{\mathbf{b}}{\mathbf{b}}} = \frac{\norm{\vr}^2 \innp{\vx}{\vd} - 2\innp{\vd}{\vr}\innp{\vx}{\vr} + \innp{\vx}{\vr}\innp{\vd}{\vr}}{\norm{\vd}^2 \norm{\vr}^2 - \innp{\vd}{\vr}^2}
\end{align*}
resulting in Equation~\eqref{eq:optimalstepsize} with the appropriate upper bound $\gamma_{\max}$.
\end{proof}

\subsection{Poisson Regression Formulation}

In this section, we revisit the Poisson regression model developed in \cite{bernard2014efficient}.
In this formulation, the flow passing through each exon is modeled as a Poisson random variable with a mean given by the sum of flows passing through that node in the input data.
The log-likelihood minimization problem is expressed as:
\begin{align*}
\min_{\vx \in \mathrm{cone}(\Xset)} \sum_{u \in V} \left[ \sum_{e \in \delta^{\mathrm{in}}_u} x_e - \left(\sum_{e \in \delta^{\mathrm{in}}_u} r_e\right) \log\left(\sum_{e \in \delta^{\mathrm{in}}_u} x_e\right)\right].
\end{align*}
This formulation computes the flow of maximum likelihood, which is a sum of weighted paths with nonnegative weights by the flow decomposition theorem applied to a DAG.
The optimal rescaling technique from the least-square formulation cannot be applied here without losing convexity of the loss function.
In order to formulate an equivalent compact set, we therefore replace $\mathrm{cone}(\Xset)$ with
\begin{align*}
\bar{\Xset} = \{\|\vr\|_{\infty} \vx : \vx \in \Xset \cup \mathbf{0}\},
\end{align*}
the convex hull of paths that can be scaled up to at most the maximum flow on the data.
For a given direction $\mathbf{g}$, the corresponding LMO corresponds to 1) computing the shortest path $\vvv$ with edge lengths given by $\mathbf{g}$ and 2) if $\innp{\vvv}{\mathbf{g}} \leq 0$, returning that vertex, otherwise returning the origin as the minimizer.
This ensures that the formulation is applicable to FW.

\section{Convergence Analysis}

The least-square objective function is not strongly nor strictly convex, its Hessian has one zero eigenvalue.
We will however show it presents a \emph{quadratic growth property} \cite{karimi2016linear}:
\begin{definition}[Quadratic growth property]\label{def:quadgrowth}
Let $f$ be a closed proper convex function defined over a compact \Xset{}, $\mathrm{dist}_*(\cdot)$ the distance to its set of minimizers and $f^*$ its minimal value.
Then it is said to satisfy a quadratic growth property with constant $\mu > 0$ if
\begin{align*}
f(\vx) - f^* \geq \mu\; \mathrm{dist}^2_*(\vx) \; \forall \vx \in \Xset{}.
\end{align*}
\end{definition}

\begin{proposition}\label{prop:objquadratic}
The least-square objective function $f$ respects the quadratic growth condition with $\mu=\frac12$.
\end{proposition}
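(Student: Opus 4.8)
The plan is to exploit the fact that the objective is a convex quadratic whose curvature is fully understood. First I would rewrite the loss through an orthogonal projector. Expanding the square gives $f(\vx) = \tfrac12 \norm{\vx}^2 - \tfrac12 \innp{\vx}{\vr}^2/\norm{\vr}^2 = \tfrac12 \innp{\vx}{\Pi \vx}$, where $\Pi = \mathbf{I} - \vr\vr^{\top}/\norm{\vr}^2$ is the orthogonal projector onto $\vr^{\perp}$. Since $\Pi$ is symmetric and idempotent, $f(\vx) = \tfrac12\norm{\Pi\vx}^2$ and the Hessian of $f$ is exactly $\Pi$, whose spectrum is $\{0,1\}$: the single zero eigenvalue lies along $\vr$ (the source of the degeneracy noted just before the statement), while the minimal \emph{nonzero} eigenvalue equals $1$. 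The target constant $\mu=\tfrac12$ is precisely half this minimal nonzero eigenvalue, which is the shape the final bound must take.

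Next I would characterize the minimizer set. Because $f$ depends on $\vx$ only through $\Pi\vx$ and is strictly convex in that component, every minimizer over \Xset{} shares the same orthogonal part $\vy^{*} := \Pi\vx^{\star}$, so $f^{*} = \tfrac12\norm{\vy^{*}}^2$ and the minimizer set is the intersection of \Xset{} with the affine manifold $\{\vx : \Pi\vx = \vy^{*}\}$, a translate of $\ker\Pi = \mathrm{span}(\vr)$. Writing $\vx^{\star}$ for the projection of a given $\vx$ onto this minimizer set and using that $f$ is quadratic, its second-order expansion is exact: $f(\vx) = f(\vx^{\star}) + \innp{\nabla f(\vx^{\star})}{\vx - \vx^{\star}} + \tfrac12\innp{\vx-\vx^{\star}}{\Pi(\vx-\vx^{\star})}$. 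The first-order optimality of $\vx^{\star}$ over the convex set \Xset{} makes the linear term nonnegative, and since $\Pi\vx^{\star}=\vy^{*}$ we are left with $f(\vx) - f^{*} \geq \tfrac12\norm{\Pi(\vx - \vx^{\star})}^2 = \tfrac12\norm{\Pi\vx - \vy^{*}}^2$.

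It then remains to identify $\norm{\Pi\vx - \vy^{*}}$ with $\mathrm{dist}_*(\vx)$. Here I would argue that, because the minimizer set is a fiber along $\mathrm{span}(\vr) = \ker\Pi$, the displacement from $\vx$ to its \emph{nearest} minimizer is the one from which the entire $\vr$-component has been removed, so this displacement lies in $\mathrm{range}\,\Pi$ and $\Pi$ acts on it as the identity. Consequently $\mathrm{dist}_*(\vx) = \norm{\Pi\vx - \vy^{*}}$, and the previous inequality becomes $f(\vx) - f^{*} \geq \tfrac12\,\mathrm{dist}^2_*(\vx)$, the quadratic growth property with $\mu = \tfrac12$. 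In the exact-flow regime $f^{*}=0$ and $\vy^{*}=\mathbf{0}$, so $\mathrm{dist}_*(\vx)=\norm{\Pi\vx}$ is simply the distance to $\mathrm{span}(\vr)$ and the inequality holds with equality.

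The main obstacle is exactly this last identification. Because $\Pi$ is a projector it \emph{contracts}, so a naive estimate $\norm{\Pi(\vx-\vx^{\star})}\le\norm{\vx-\vx^{\star}}$ points the wrong way; the zero eigenvalue along $\vr$ must be prevented from inflating $\mathrm{dist}_*$. The delicate point is therefore to show that the nearest-minimizer displacement charges no distance to the degenerate direction, which is where the geometry of the minimizer set — that it is a full fiber along $\mathrm{span}(\vr)$ — is essential and is what pins the constant at $\tfrac12$ rather than something strictly smaller.
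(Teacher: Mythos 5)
Your first three steps are correct and are actually more careful than the paper's own argument: the projector rewrite $f(\vx) = \tfrac12\norm{\Pi\vx}^2$, the characterization of the minimizers over the polytope as the fiber $\mathrm{conv}(\Xset{}) \cap M$ with $M = \{\vz : \Pi\vz = \vy^*\}$, and the bound $f(\vx) - f^* \geq \tfrac12\norm{\Pi\vx - \vy^*}^2$ from the exact quadratic expansion plus first-order optimality. The gap is the final identification $\mathrm{dist}_*(\vx) = \norm{\Pi\vx - \vy^*}$, precisely the step you flagged as delicate. The minimizer set is not the whole line $M$ but only its intersection with the polytope, and the orthogonal projection of $\vx$ onto $M$ can land outside the polytope; the nearest minimizer is then an endpoint of that segment, the displacement to it has a nonzero component along $\vr$, and $\mathrm{dist}_*(\vx) > \norm{\Pi\vx - \vy^*}$, so your chain of inequalities points the wrong way. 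This is not a repairable technicality: take the DAG with the two paths $s\to a\to t$ and $s\to t$, so $\vvv_1 = (1,1,0)$, $\vvv_2 = (0,0,1)$, with the exact flow $\vr = (1,1,1)$. The unique minimizer over $\mathrm{conv}(\Xset{})$ is $\vr/2$, and at $\vx = \vvv_1$ one has $f(\vx) - f^* = \tfrac13$ while $\tfrac12\,\mathrm{dist}^2_*(\vx) = \tfrac12\cdot\tfrac34 = \tfrac38 > \tfrac13$. Hence quadratic growth with $\mu = \tfrac12$ relative to the constrained minimizers is simply false (the best constant on this instance is $\tfrac49$); note this is an exact-flow instance, exactly the regime in which you claimed the inequality holds with equality.

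The paper never meets this obstacle because it proves a different, weaker statement: its proof explicitly takes the set of optimizers to be the \emph{unconstrained} optima $X^* = \{\alpha\vr : \alpha \in \R_+\}$ and measures $\mathrm{dist}_*$ to that ray, for which $f(\vx)$ is by construction exactly half the squared distance (using $\innp{\vx}{\vr} \geq 0$ on the polytope), so $\mu = \tfrac12$ holds as an identity with no projection or fiber argument needed. Your attempt targets the constrained-minimizer reading of Definition~\ref{def:quadgrowth}, which is the stronger property needed in general; since that property fails with the constant $\tfrac12$, no patch of your last step can succeed, and the proposition can only be recovered under the paper's unconstrained interpretation of $\mathrm{dist}_*$ (or with an instance-dependent constant $\mu$ in the constrained reading).
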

\begin{proof}
From the expression of the gradient, we can deduce that unconstrained optima are of the form:
\begin{align*}
X^* \ \mathrm{:=}\  \{\vx^* \in \R^{|E|}_+ : \vx^* = \alpha \vr, \alpha \in \R_+\} ,
\end{align*}
which implies that the projection $\mathrm{proj}_*(\vx)$ of a point $\vx$ onto the set of optimizers, and the distance of that point to the set of optimizers $\mathrm{dist}_*(\vx)$ are respectively given by:
\begin{align*}
    & \mathrm{proj}_*(\vx) = \frac{\innp{\vr}{\vx}}{\norm{\vr}^2} \vr, \; \; \; \mathrm{dist}_*(\vx) = \left\| \vx - \frac{\innp{\vr}{\vx}}{\norm{\vr}^2} \vr \right\|.
\end{align*}
The objective function is precisely half of the squared distance, resulting in $\mu=\frac12$ with the notation of Definition~\ref{def:quadgrowth}.
\end{proof}

The BPCG algorithm thus converges linearly for the least-square problem, based on \cite[Theorem 3.6]{wirth2024fast} and Proposition~\ref{prop:objquadratic},
since the quadratic growth condition is a special case of sharpness, and our function is Lipschitz-smooth.

Unlike the least-square formulation, the Poisson objective function is not Lipschitz-smooth, the classic FW convergence results hence do not apply.
However, it is three times differentiable on its domain and self-concordant.
FW with some step-size strategies have been shown to converge on (generalized) self-concordant functions in \cite{dvurechensky2023generalized} and \cite{carderera2024scalable} at the usual $\mathcal{O}(1/t)$ rate and at a linear rate in specific settings (including strongly convex functions) which do not match ours.
Together with the fact that the feasible set is a polytope, \cite{zhao2025new} establishes linear convergence of Away-step Frank-Wolfe without requiring strong convexity if the objective is the composition of a logarithmically homogeneous self-concordant barrier (LHSCB) function (see Definition~\ref{def:logbarrier}) with an affine map.
Their result is extended in \cite{hendrych2023solving} to the BPCG algorithm we apply here, which typically produces sparser iterate than Away-step FW.
We present below the definition of a LHSCB function and apply it to our objective function.
\begin{definition}\label{def:logbarrier}
A convex function $g: \R^n \rightarrow \R$ that is three-times differentiable on its domain is a $\theta$-logarithmically homogeneous self-concordant barrier for a proper (closed, convex, pointed) cone $\mathcal{K}$ iff:
\begin{enumerate}
\item It is self-concordant: $|D^3g(\vz)[\vu,\vu,\vu] | \leq 2 (D^2 g(\vz)[\vu,\vu])^{\frac32}$.
\item It is a barrier for $\mathcal K$: for any sequence $\{\vz_k\}_{k \geq 1}$ such that $\vz_k \rightarrow \mathrm{boundary}(\mathcal K)$, $g(\vz_k) \rightarrow \infty$.
\item It is $\theta$-logarithmically homogeneous: $g(\alpha \vz) = g(\vz) - \theta \log(\alpha)$.
\end{enumerate}
\end{definition}

We show that their framework is applicable to the Poisson objective in Proposition~\ref{prop:objpoisson}.
\begin{proposition}\label{prop:objpoisson}
The Poisson objective function can be written as:
\begin{align}\label{eq:poissonlog}
    f(\vx) = h(A\vx) + \innp{\vb}{\vx},
\end{align}
where $A: \R^{|E|} \rightarrow\R^{|V|}$ is a linear operator, $h: \R^{|V|} \rightarrow \R$ is a logarithmically homogeneous self-concordant barrier function and $\vb \in \R^{|E|}$.
\end{proposition}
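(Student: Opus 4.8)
The plan is to read the operator $A$, the vector $\vb$, and the barrier $h$ directly off the summation defining the Poisson loss, and then to certify that $h$ is an LHSCB by checking the three conditions of Definition~\ref{def:logbarrier} in turn. First I would introduce, for each node $u$, the aggregated in-flow $(A\vx)_u \ \mathrm{:=}\ \sum_{e \in \delta^{\mathrm{in}}_u} x_e$. The map $A : \R^{|E|} \to \R^{|V|}$ defined this way is linear, being the row-wise sum of the in-edge indicators, i.e.\ a submatrix of the incidence matrix of $G$. Writing $c_u \ \mathrm{:=}\ \sum_{e \in \delta^{\mathrm{in}}_u} r_e = (A\vr)_u$ for the reference in-flow, the objective becomes $\sum_{u \in V} (A\vx)_u - \sum_{u \in V} c_u \log\bigl((A\vx)_u\bigr)$.

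The next step is to split this into an affine and a barrier part. The first sum collapses over edges: since every edge is the in-edge of exactly one node, $\sum_{u} (A\vx)_u = \sum_{e} x_e = \innp{\mathbf{1}}{\vx}$, so I set $\vb = \mathbf{1} \in \R^{|E|}$. The remaining term is exactly $h(A\vx)$ with $h(\vy) \ \mathrm{:=}\ -\sum_{u} c_u \log(y_u)$, yielding the decomposition claimed in Equation~\eqref{eq:poissonlog}. It then remains to show that $h$ is a $\theta$-LHSCB for the nonnegative orthant $\mathcal{K} = \R^{|V|}_+$, restricted to the coordinates $u$ with $c_u > 0$; nodes without in-edges, such as the source, contribute a vanishing term and are simply dropped. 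The function $h$ is three-times differentiable and separable, so every directional derivative splits coordinate-wise. For a single term $-c\log(y)$ one computes $D^2 = c\,u^2/y^2$ and $|D^3| = 2c\,|u|^3/y^3$, so $|D^3| \le 2(D^2)^{3/2}$ reduces to $c \ge 1$; summing the per-coordinate bounds and using $\sum_u b_u^{3/2} \le (\sum_u b_u)^{3/2}$ for nonnegative $b_u$ gives self-concordance of $h$ with the canonical constant. The barrier property is immediate, since $-c_u\log(y_u) \to +\infty$ as $y_u \to 0^+$ whenever $c_u > 0$, and logarithmic homogeneity follows from $\log(\alpha y_u) = \log\alpha + \log y_u$, giving $h(\alpha\vy) = h(\vy) - \theta\log\alpha$ with $\theta = \sum_u c_u = \innp{\mathbf{1}}{\vr}$, the total reference flow.

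The step I expect to be the crux is the self-concordance inequality, specifically the requirement $c_u \ge 1$ needed for the canonical normalization of Definition~\ref{def:logbarrier}: the weighted term $-c\log(y)$ satisfies $|D^3| \le 2(D^2)^{3/2}$ only when $c \ge 1$, because rescaling a self-concordant function by $\alpha$ multiplies its self-concordance constant by $\alpha^{-1/2}$. I would resolve this by invoking the integrality of the read counts $r_e$, which forces each $c_u$ to be a nonnegative integer; hence either $c_u = 0$, in which case the node is excluded, or $c_u \ge 1$, in which case the per-coordinate inequality holds. With this observation the separable summation argument closes the proof, and the barrier parameter $\theta = \innp{\mathbf{1}}{\vr}$ records the total reference flow.
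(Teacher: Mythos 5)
Your proof is correct and rests on exactly the same decomposition as the paper's: identical $A$, identical barrier $h(\vz) = -\sum_{u} c_u \log z_u$ with $c_u = \sum_{e \in \delta^{\mathrm{in}}_u} r_e$, and $\vb = \mathbf{1}$, which coincides with the paper's $\innp{\vb}{\vx} = \sum_{u}\sum_{e \in \delta^{\mathrm{in}}_u} x_e$ since every edge has exactly one head. Where you differ is in how the LHSCB property is certified. The paper cites a result on nonnegatively weighted sums of self-concordant functions \cite{sun2019generalized} and asserts homogeneity with parameter $\theta$ (its displayed $\theta$ omits the outer sum over $u$, evidently a typo; your $\theta = \innp{\mathbf{1}}{\vr}$ is the correct total). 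You instead verify self-concordance by the per-coordinate third-derivative computation plus the superadditivity bound $\sum_u b_u^{3/2} \leq (\sum_u b_u)^{3/2}$, and this more elementary route exposes a genuine subtlety that the paper's citation glosses over: the term $-c\log y$ satisfies the canonical inequality of Definition~\ref{def:logbarrier} only when $c \geq 1$, and for weights $c_u \in (0,1)$ one gets only a larger (generalized) self-concordance constant rather than the constant $2$ that the definition demands. Your fix --- integrality of the read counts forces $c_u \in \{0\} \cup [1,\infty)$ --- is sound for the RNA application but imports an assumption the proposition does not state; a data-independent repair would rescale the barrier (an $M$-self-concordant function multiplied by $\lambda$ has constant $M/\sqrt{\lambda}$) or invoke the generalized self-concordance convergence framework directly. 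You also explicitly drop the coordinates with $c_u = 0$, which is in fact necessary for the barrier property: the source node has no in-edges, so its coordinate of $A\vx$ vanishes identically, and the paper's claim that $h$ blows up on the whole boundary of $\R^{|V|}_+$ silently requires this same restriction. In short, your argument is more self-contained and more careful about normalization and degenerate coordinates, at the price of an application-specific integrality hypothesis; the paper's is shorter but leaves both of these issues implicit.
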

\begin{proof}
The expression of the objective as Equation~\eqref{eq:poissonlog} follows from the following elements
\begin{align*}
    & (A \vx)_u = \sum_{e \in \delta^{\mathrm{in}}_u} x_e \\
    & h(\vz) = -\sum_{u\in V} (\sum_{e \in \delta^{\mathrm{in}}_u} r_e) \log z_u \\
    & \innp{\vb}{\vx} = \sum_{u \in V} \sum_{e \in \delta^{\mathrm{in}}_u} x_e.
\end{align*}
The function $h$ is self-concordant as the weighted sum of self-concordant functions with nonnegative weights \cite[Proposition 1]{sun2019generalized}.
It is also $\theta$-logarithmically-homogeneous with
\begin{align*}
    & \theta = (\sum_{e \in \delta^{\mathrm{in}}_u} r_e) \\
    \text{since }\; & h(\alpha\vz) = h(\vz) - \sum_{u \in V} (\sum_{e \in \delta^{\mathrm{in}}_u} r_e) \log(\alpha).
\end{align*}
Finally, $h$ is a log-barrier for $\R^{|V|}_+$ since for any point $\bar\vz \in \mathrm{boundary}(\R^{|V|}_+)$, a sequence $\{\vz_k\}_{k \geq 1}$ of componentwise positive vector such that $\lim\limits_{k \rightarrow \infty} \vz_k = \bar\vz$ results in $\lim\limits_{k\rightarrow +\infty} h(\vz_k) = +\infty$.
\end{proof}
Equipped with the result of Proposition~\ref{prop:objpoisson}, we can use the convergence guarantees of \cite{zhao2025new,hendrych2023solving}
to ensure that the BPCG algorithm we are using achieves a linear convergence rate
\begin{align*}
f(\vx_t) - f^* \leq \mathcal{O}(\exp(-ct)),    
\end{align*}
matching the empirical rate of our computational experiments.


\subsection{Early Termination}\label{sec:earlystop}

First-order methods are known for their good scalability due to a low cost per iteration, although they can be hindered by a high number of iterations compared, e.g., to interior points.
We propose an early stopping criterion for the least-square loss to reduce the number of iterations when the current decomposition is converging towards the scaled flow.
For a given solution $\vx$, we can compute its optimal scaling $\alpha$ by minimizing over $\alpha \geq 0$ the least-square error:
$\norm{\alpha \vx - \vr}^2$. Note that this scales up the solution $\vx$ instead of scaling down the flow ``$\vr$'' in order to obtain an integer solution in the conic hull.
We did not optimize this function directly since it would result in a nonconvex objective.
We can derive the optimal $\alpha^* = \innp{\vx}{\vr} / \norm{\vx}^{2}$ and compute conic weights rounded to the closest integer from the current active set weights $\Scal$: $\left\{\mu_k\right\}_{k \in 1\dots |\Scal|} = \left\{\lfloor \alpha^* \lambda_k \rceil\right\}_{k \in 1\dots |\Scal|}$.
At any iteration, a simple test can be performed to check whether the resulting decomposition exactly matches the original flow $\vr$, in which case we can stop the algorithm.

\subsection{Iteration Cost}

We break down the cost of iterations of Algorithm~\ref{alg:algorithm} from individual components.
A crude upper bound on the cost of the linear minimization oracle is $\mathcal{O}(|V| |E|)$ provided by the Bellman-Ford algorithm performing a single-source shortest path with negative edge lengths.
A finer bound is provided by the Goldberg-Radzik algorithm \cite{goldberg1993heuristic} which obtains a $\mathcal{O}(|V| + |E|)$ runtime with a modification proposed in \cite{cherkassky1996shortest}.
Function and gradient evaluations, as well as the exact step size computation can all be performed in $\mathcal{O}(|E|)$.
The last operation that could dominate the iteration cost is the inner product search over the active set performed in Line~\ref{line:search1} and \ref{line:search2} of Algorithm~\ref{alg:algorithm}.
Indeed, in the worst case, the algorithm would add one vertex to the active set per iteration, resulting in a cost of the active set search $\mathcal{O}(T |E|)$ for the last iterations.
However, this represents a worst-case that is not tight in the light of the active set identification and associated bounds from \cite{bomze2020active,wirth2024pivoting} presented in Subsection~\ref{sec:prbformulation},
with the number of vertices after a finite number of iterations being bounded by the dimension of the optimal face plus one.
For practical purposes of the SFD application, the cost of the active set search has not been limiting in the computational experiments.
Future work could consider applying rerent advancements in inner-product data structures such as the one presented in \cite{xu2021breaking,song2022acceleratingfrankwolfealgorithmusing} to derive a cost of the search almost linear in the number of vertices and dimension.

\section{Computational Experiments}

In this section, we evaluate the performance of our proposed FW approach compared to recent integer-based methods for multi-assembly problems with and without error.
The source code for all experiments is available at \cite{repo} and archived at \cite{besancon_2025_16033781}.

\paragraph*{Experimental Setup\\}
We perform all our computations in Julia 1.11.
The mixed-integer problems are modeled with JuMP 1.23~\cite{lubin2023jump} and solved with SCIP 9.2~\cite{bolusani2024scip}.
We load the DAGs into \texttt{Graphs.jl} and use its implementation of Bellman-Ford for shortest paths.
We compare the solution quality to that of the integer optimization formulations from \cite{dias2022efficient} and \cite{dias2024accurate}.
We use the BPCG implementation present in FrankWolfe.jl 0.4 \cite{besanccon2022frankwolfe,besanccon2025improved}, noted \texttt{FW},
the same algorithm with early termination from Subsection~\ref{sec:earlystop}, noted \texttt{FW-C},
the Poisson loss optimized with FW noted as \texttt{FW-P},
the integer optimization model from \cite{dias2022efficient} noted \texttt{IP}, and the robust integer optimization version noted \texttt{IP-R}.
FW algorithms are limited to 5000 iterations.
All methods are restricted to 1800 seconds as a time limit.
Experiments are performed on a cluster with all nodes equipped with Intel Xeon Gold 6338 2GHz CPUs and 512GB of RAM.

\paragraph*{Instance Data\\}
We use the dataset compiled in \cite{fernando_h_c_dias_2024_10775004} which contains data for four species summarized in Table~\ref{tab:inexactcounts}.
These datasets notably include the ground truth transcripts and their abundance, i.e., the paths and corresponding weights and can thus be used to assess the ability of the various methods to recover the underlying structure.

\begin{table}
\centering
\caption{Statistics on the inexact dataset. Species are indicated by their first letter, human, zebrafish, salmon, mouse.
Non-trivial graphs refer to graphs with an upper bound $k = |E| - |V| + 2$ on the number of paths being greater than one.
\vspace*{0.2cm}
}
\begin{tabular}{lrrrrr}
\toprule
Species & h & z & s & m & total \\
\midrule
\# DAGs  &  11783 & 15664 & 40870 & 13122 & 81439\\
$\%$ non-triv. & 0.45 & 0.29 & 0.36 & 0.36 & 0.36 \\
\bottomrule
\vspace*{0.01cm}
\end{tabular}
\label{tab:inexactcounts}
\end{table}

\subsection{Reconstruction of Inexact Flows}

We apply the FW-based approaches, the integer optimization formulation and the robust integer formulation to the inexact flow instances from \cite{fernando_h_c_dias_2024_10775004}.
We quantify the reconstruction quality with two metrics, the \emph{path error} and \emph{flow error}.
The path error is computed as the number of paths in the solution with a weight different from the ground truth decomposition.
The flow error is the Euclidean distance between the true flow and the reconstructed one.
The results are summarized in Table~\ref{tab:gtstats}.

\setlength{\tabcolsep}{4pt}
\begin{table*}
\centering
\caption{Aggregate statistics on the inexact flow decomposition problem using all non-trivial instances.
The path and flow errors and number of paths are computed on 29328 instances, excluding 33 instances for which either \texttt{IP} or \texttt{IP-R} did not find any primal solution in the time limit.
For the path and flow errors and the number of paths, the first number is the arithmetic mean of the metric, the second is the shifted geometric mean (with shift 1), and the last is the number of instances on which the method performed the best on the given metric.
The relative flow error is computed as the flow error divided by the number of edges of the graph.
The flow error with the (o) label reports the mean (resp.~geometric mean) for instances where both integer models were optimized to proven optimality, thus removing instances for which they computed a solution but could not close the gap.
Only the shifted geometric mean of the runtime is displayed for conciseness.
\vspace*{0.2cm}
}
\begin{tabular}{lrrrrr}
\toprule
Metric & \texttt{FW} & \texttt{FW-C} & \texttt{FW-P} & \texttt{IP} & \texttt{IP-R} \\
\midrule
Path error &  3.64 / 2.23 / 13078  &  3.62 / 2.22 / 13189  &  4.79 / 4.03 / 1846  &  4.38 / 3.68 / 4819  &  2.91 / 1.68 / 16584  \\
Flow error &  4.82e-01 / 1.15e-01 / 27118  &  4.81e-01 / 1.15e-01 / 27125  &  6.27e+01 / 5.55e+00 / 16171  &  4.38e+00 / 3.68e+00 / 104  &  2.91e+00 / 1.68e+00 / 10245  \\
Flow err. (o) &  3.80e-01 / 5.52e-02  &  3.80e-01 / 5.49e-02  &  3.93e+01 / 3.18e+00  &  3.49e+00 / 3.14e+00  &  2.06e+00 / 1.26e+00  \\
Flow rel.~err. &  1.56e-02 / 6.79e-03  &  1.55e-02 / 6.77e-03  &  2.74e+00 / 9.55e-01  &  2.04e-01 / 2.00e-01  &  1.14e-01 / 1.09e-01  \\
\# paths &  4.69 / 3.96 / 10204  &  4.69 / 3.96 / 10204  &  4.79 / 4.03 / 9910  &  3.80 / 3.16 / 24659  &  4.19 / 3.54 / 17682  \\
Time (s) &  2.46e-04  &  7.91e-04  &  3.67e-01  &  3.43e+00  &  7.62e-01  \\
\bottomrule
\end{tabular}
\label{tab:gtstats}
\end{table*}

We observe that despite producing sparser solutions on average, the integer optimization model performs worse than the two FW-based approaches in terms of path and flow error.
The robust optimization model yields the best path error performance at the cost of an increased runtime.
Surprisingly, the robust integer model is not costlier than the original integer model despite the increased number of variables and constraints.
We first analyze the solution qualities on the different metrics. The differences in runtime are analyzed in more details below.
The \texttt{FW-C} method performs slightly better than \texttt{FW} on both error metrics, meaning our specific early termination criterion from Subsection~\ref{sec:earlystop} yields a better solution than reaching the least-square optimum.
Note that this is however dependent on the assumption that the underlying flow is formed from integer weights of the individual paths.
Importantly, we observe that despite producing sparser solutions, the \texttt{IP} model rarely results in the best reconstruction in terms of path and flow errors.
This may imply that least-cardinality is not sufficient as a solution concept to reconstruct transcripts from data, and a statistical approach should be prefer to model the data-generating process, with sparsity being handled through the algorithmic process instead of the formulation.
When considering all instances for which a primal solution was returned, the flow reconstructed by FW methods was superior to \texttt{IP-R}.
The runtime distribution of the different methods are presented in Figure~\ref{fig:timeinexact}.

\begin{figure}[h]
\centering
\includegraphics[width=0.5\textwidth]{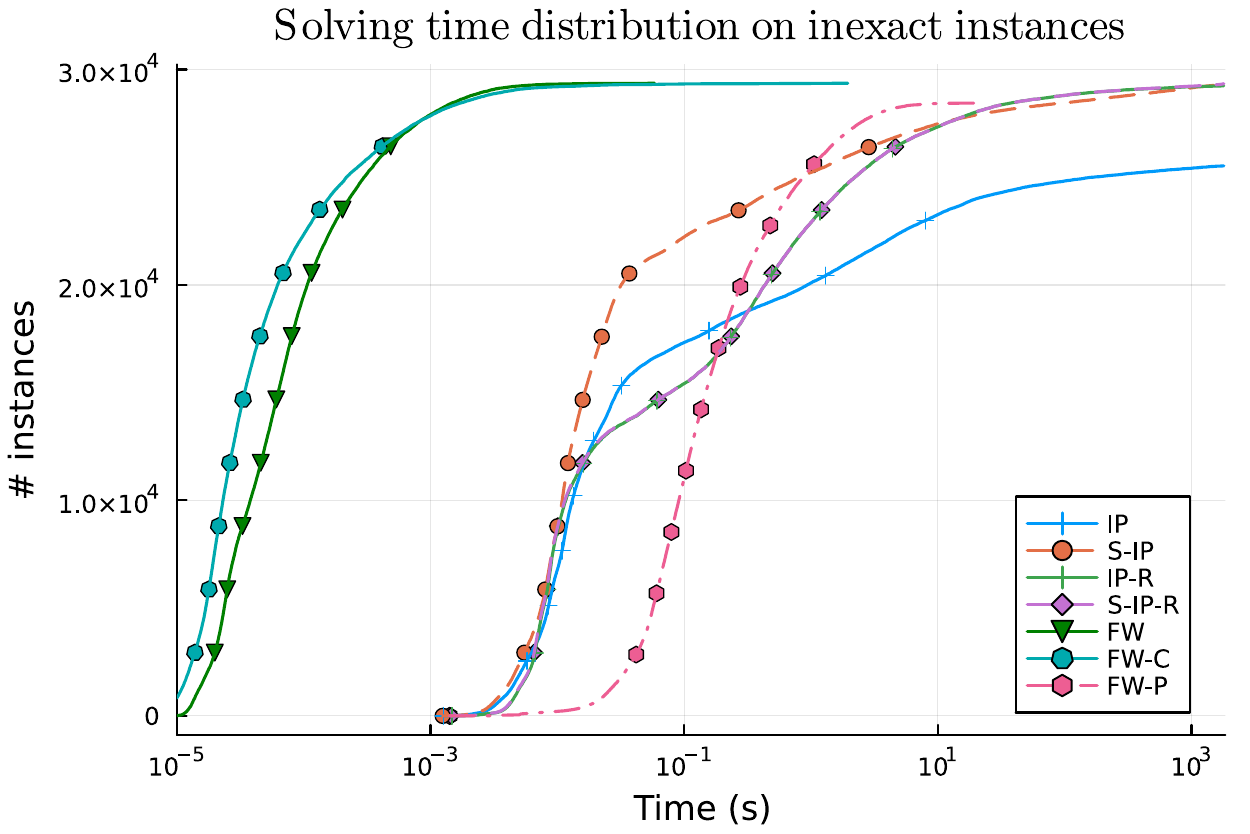}
\caption{Runtime distribution of the different methods on all inexact instances.\vspace*{0.2cm}
\texttt{IP} is the integer optimization formulation with intervals, \texttt{IP-R} is the robust optimization formulation. The prefix \texttt{S-} indicates the time to the best found solution.
\texttt{FW} is the blended pairwise Frank-Wolfe method, \texttt{FW-C} includes the early termination criterion based on the flow integrality assumption.
\texttt{FW-P} is the Poisson regression problem.
}
\label{fig:timeinexact}
\end{figure}

The FW methods applied to the least-square loss clearly outperform all others in runtime.
There are 919 instances on which \texttt{FW-P} reaches the iteration limit due to numerical instabilities which we further analyze below.
The methods \texttt{IP}, \texttt{IP-R} reach the time limit for 3829, resp.~119 instances. The \texttt{FW-C} method terminates faster for small instances than \texttt{FW}, showing that exploiting the weight integrality assumption can accelerate the solution process, in addition to the improved solution quality shown above in Table~\ref{tab:gtstats}.

In Figure~\ref{fig:salmon}, we illustrate the numerical difficulty on an instance created from the inexact dataset with additional Poisson noise on the flow data.
The step size used is the adaptive step size from \cite{pedregosa2020linearly}.
We observe in particular that when using standard 64-bit precision, the primal value and FW gap can stall because of numerical errors.
These numerical errors can be linked to the logarithm terms in the Poisson objective which induces a large range in the magnitude of the coefficients in the gradient.
In addition, the non-smoothness yields challenging subproblems in the adaptive line search, potentially causing excessive estimates of the local Lipschitz constant.
Such numerical challenges with the line search on self-concordant functions were already reported in \cite{carderera2021simple,carderera2024scalable}.
Other similar line searches such as the one introduced in \cite{pokutta2024frank} or \cite{hendrych2025secant} improve stability but at the cost of additional gradient evaluations in the line search procedure.

\begin{figure}
\centering
\includegraphics[trim={0cm 0 0 0.95cm},clip,width=0.52\textwidth]{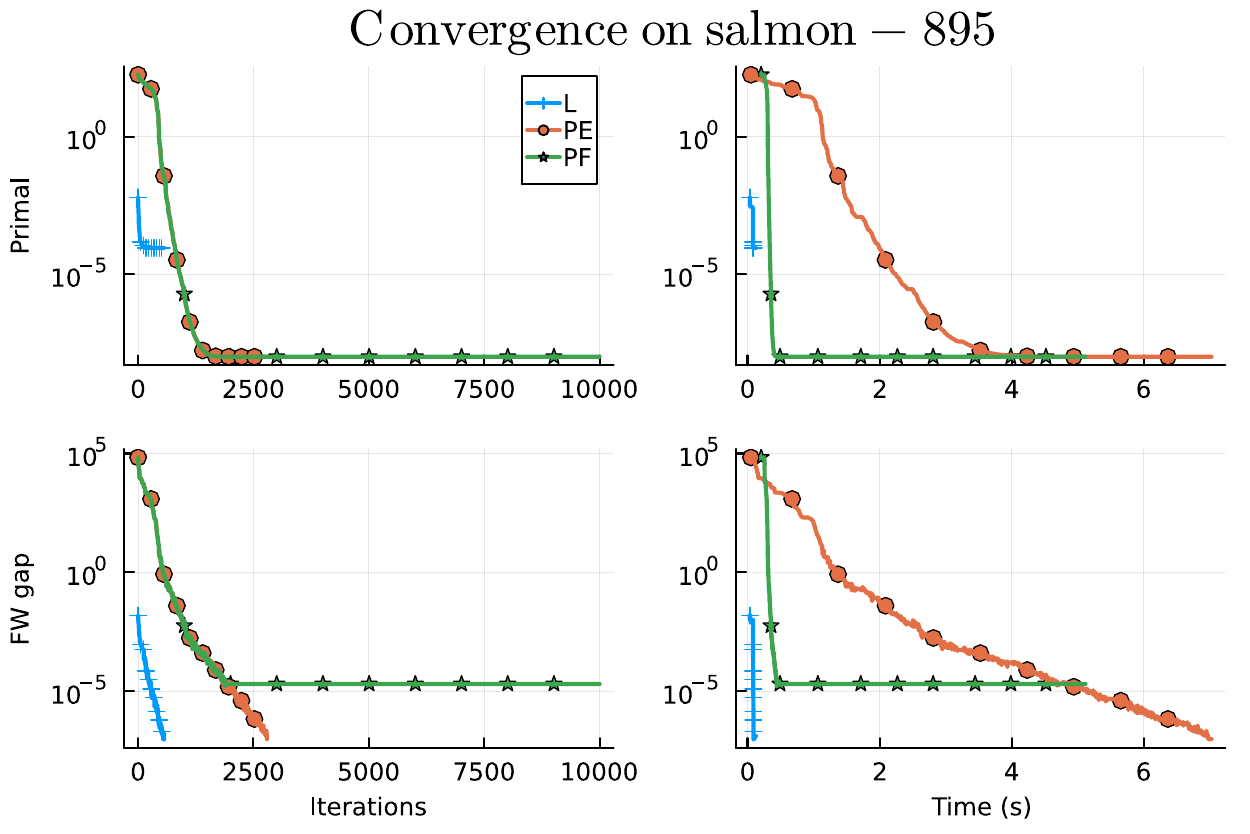}
\caption{
Convergence of the Poisson and least-square formulations on the instance \texttt{salmon-895}, with the primal and FW gap trajectories displayed for the least-square model ``L'', the Poisson model using standard 64-bit floating points ``PF'' and the same loss optimized in extended precision using Julia's \texttt{BigFloat} labeled ``PE''.
\vspace*{0.1cm}
}
\label{fig:salmon}
\end{figure}

\subsection{Reconstruction with Error Distributions}

In order to test the dependence of the different models' performance on the error distribution, we produce more instances from a subset of large splice graphs.
We take all salmon instances for which the upper bound on the number of paths $k$ is at least 16 and the number of edges at least 81, resulting in 109 instances.
On these instances, we perturb the true flow $\bar\vr$ either with a Poisson distribution of parameter $\lambda_{e} = r_e$, or with a binomial distribution of parameters $p=0.5, n=2 r_e$ so that in both cases, the pseudo-flow used by the methods remains nonnegative and of expectation $r_e$.
The results are presented in Table~\ref{tab:errorinstancestats}; we removed \texttt{FW-C} since it performs similarly to \texttt{FW} on both groups of instances and \texttt{IP} given its poor performance on previous instances, and its need for an interval for the value of the flow on each edge.
\begin{table}[b]
\centering
\caption{Results on the 109 large salmon instances with binomial and Poisson distributions of the observed flow.
The presented metrics are identical to Table~\ref{tab:gtstats}.\vspace*{0.1cm}
}
\begin{tabular}{llrrr}
\toprule
Dist.~ & Metric & \texttt{FW} & \texttt{FW-P} & \texttt{IP-R} \\
\midrule
 & Path error &  25.61 / 24.84 / 4  &  20.56 / 19.88 / 51  &  18.86 / 18.40 / 65  \\
 & Flow error &  33.40 / 31.22 / 106  &  408.30 / 291.30 / 0  &  201.71 / 96.57 / 3  \\
Binomial & Flow rel.~err. &  0.36 / 0.35  &  4.47 / 3.33  &  1.97 / 1.24  \\
 & \# paths &  25.53 / 24.76 / 2  &  20.56 / 19.88 / 38  &  17.32 / 16.98 / 77  \\
 & Time (s) &  1.21e-02  &  8.00e+00  &  1.80e+03  \\
\midrule
 & Path error &  26.13 / 25.29 / 3  &  20.54 / 19.90 / 48  &  18.46 / 17.96 / 71  \\
 & Flow error &  49.63 / 45.86 / 107  &  388.32 / 285.85 / 0  &  191.70 / 123.45 / 2  \\
Poisson & Flow rel.~err. &  0.53 / 0.51  &  4.23 / 3.23  &  2.00 / 1.48  \\
 & \# paths &  26.06 / 25.29 / 1  &  20.54 / 19.90 / 33  &  16.91 / 16.43 / 84  \\
 & Time (s) &  2.61e-02  &  7.84e+00  &  1.80e+03  \\
\bottomrule
\end{tabular}
\label{tab:errorinstancestats}
\end{table}

As a first observation, the relative performance and behavior of all three models is similar for the two flow distributions, leading us to conclude that the models are robust beyond their initial modeling assumptions.
We still note that the flow error of \texttt{FW-P} is lower under a Poisson distribution than under a binomial distribution, while the flow error of the least-square model is higher under a Poisson model than under the binomial one.
The effect of the flow distribution on the flow error of \texttt{IP-R} does not follow as clear of a trend, the average relative flow error and the geometric mean of the flow error being higher under a Poisson distribution but the arithmetic mean of the absolute flow error being lower.
The robust integer model \texttt{IP-R} performs the best in a majority of instances in terms of path error, followed by the Poisson regression model \texttt{FW-P}, followed by the least-square model \texttt{FW}.
In terms of flow error, the least-square model outperforms the two other methods by far and achieves the best performance on almost all instances under both flow distributions.

\section{Conclusion}

In this paper, we presented novel formulations for the sparse flow decomposition problem, formulating it as regression problems over the flow polytope and proposed an algorithmic framework producing sparse iterates and corresponding convex decompositions.
The proposed methods are efficient and converge to the optimum of the corresponding formulation at a linear rate, ensuring their applicability to transcriptomics for genes with large splice graphs.
The derived algorithm performs at each iteration a computation of the loss gradient and computes a shortest path on the DAG, thus leveraging specialized low-cost algorithms.
Furthermore, they offer a high reconstruction performance, on par with the best integer optimization model from the literature, showing that even though previous formulations were NP-hard, they are not necessarily the (unique) way to approach transcript multi-assembly.
In particular, these results show that even though the underlying decompositions are sparse, they are not necessarily the \emph{sparsest} and seeking a minimum decomposition does not provide the best reconstruction error or path recovery.
Due to the generic nature of the flow fitting optimization problem, our framework is applicable under a variety of loss functions, leaving the possibility to test more distributional assumptions in future work.
Beyond the problem tackled in this paper, future work will also consider extending our proposed approach to other problems in which one seeks a solution constructed as convex or conic combinations of elementary combinatorial objects, including for instance flow decomposition of non-acyclic graphs \cite{sena2024flowtigs}.




\begin{ack}
We thank Miroslav Kratochvíl, St.~Elmo Wilken and Laurène Pfajfer for insights into the nature of transcriptomics problems and Panayotis Mertikopoulos for fruitful discussions on the objective function of our formulation.
This work benefitted from the support of the FMJH Program PGMO.
\end{ack}

\clearpage
\bibliography{references}

\end{document}